\documentclass{article}
\usepackage{graphicx} % Required for inserting images
\usepackage[utf8]{inputenc}
\usepackage{t1enc}
\usepackage{amsmath}
\usepackage{arydshln}
\usepackage{amssymb}
\usepackage{amsthm}
\usepackage[hidelinks]{hyperref}
\usepackage{titling}

\title{On the number of $2$-dice games with prime power dice-size}
\author{Daniel Seress}
\predate{}
\postdate{}
\date{}

\begin{document}

\theoremstyle{definition}
\newtheorem{definition}{Definition}[subsection]
\newtheorem{remark}[definition]{Remark}
\newtheorem{proposition}[definition]{Proposition}
\newtheorem{example}[definition]{Example}
\newtheorem{lemma}[definition]{Lemma}
\newtheorem{theorem}[definition]{Theorem}
\newtheorem{corollary}[definition]{Corollary}
\newtheorem{conjecture}[definition]{Conjecture}
\newtheorem{problem}[definition]{Problem}

\maketitle

We have two dices which have $n$ labels. We want to assign a pair of labelings with positive integers to them such that the sums of the labels on them are the same and have the same distribution as at the standard labeling. We call such a pair of labelings a (dice) game of size $2$ (with dice size $m$).

Here we give a lower and an upper bound for the dice games of size $2$ with Sicherman dice of dice size $p^{k}$ where $p$ is a prime. We use the standard technique of generating polynomials to prove a non-closed combinatorial formula.
\begin{definition}
The \emph{size} of a dice is the number of its labels (counting repetitions). \cite{GR}

A dice of size $m$ is \emph{standard} if its labels are $1$ through $m$. \cite{GR}

A set of $n$ dice (each of size $m$) is called an \emph{$n$-dice game} (\emph{with dice-size $m$}). \cite{GR}

A game with $n$ standard dice (each of size $m$) is called a \emph{standard $n$-dice game} (\emph{with dice-size $m$}).
\end{definition}
The general problem is the following: Given $n$ and $m$, determine all possible $n$-dice games with dice-size $m$, so that the probability of obtaining any particular sum is the same as that obtained in the standard $n$-dice game with dice-size $m$.
\begin{definition}
Any labeling that appears on one of $n$ such dice is called
a \emph{solution of an $n$-dice game with dice-size $m$}. \cite{GR}

A game with the same distribution of sums as a standard game is called a \emph{sum-standard game}
\end{definition}
\cite{GR} counts \emph{dice} of a given size $m$ in $n$-dice games where $n$ is unspecified. Here we count \emph{$2$-dice games} with a given dice-size $m$.

Let $S(n, m)$ be the number of \emph{nonstandard} solutions of an $n$-dice game with dice-size $m$.

Let $(a_{1}, ..., a_{m})$ and $(b_{1}, ..., b_{m})$ be the labelings of two dice of size $m$. Let $f(x) = \sum_{i=1}^{m} x^{a_{i}}$ and $g(x) = \sum_{i=1}^{m} x^{b_{i}}$ be the polynomials assigned to them as written in \cite{GR}.

Then $f(1) = m$ and $g(1) = m$. The coefficients of $f$ and $g$ are nonnegative integers.

The polynomial assigned to the standard dice of size $m$ is $\sum_{i=1}^{m} x^{i}$.

The values and the distribution of the sum are represented by the product of the two polynomials. We have the following equation.
\[\sum_{i=1}^{m} x^{a_{i}} \cdot \sum_{i=1}^{m} x^{b_{i}} = \left( \sum_{i=1}^{m} x^{i} \right)^{2}\]
\[\sum_{i=1}^{m} x^{a_{i} -1} \cdot \sum_{i=1}^{m} x^{b_{i} -1} = \left( \sum_{i=1}^{m} x^{i -1} \right)^{2} = \left( \sum_{i=0}^{m-1} x^{i} \right)^{2} =\]
\[= \left( \frac{x^{m} -1}{x-1} \right) ^{2} = \left( \prod_{1 < d \mid m} \Phi_{d}(x) \right) ^{2} = \prod_{1 < d \mid m} \Phi_{d}(x)^{2}\]
Since the cyclotomic polynomials $\Phi_{d}(x)$ are irreducible over the integers, $f$ and $g$ both have to be the product of some of them.

The games of size $2$ can be identified with the ordered pairs $(f, g)$.
\begin{theorem}[Theorem 1 in \cite{GR}]
A polynomial $P(x)$ is a solution to some game with dice-size $m$ if and only if
\begin{itemize}
    \item $P(x)$ has nonnegative, integral coefficients;
    \item $P(x)$ is monic;
    \item $P(1) = m$;
    \item $P(x) /x$ is a polynomial, all of whose roots are $m$th roots of unity.
\end{itemize}
\end{theorem}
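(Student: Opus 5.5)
The plan is to prove the two directions separately. Necessity is formal and follows from the cyclotomic factorisation displayed above. Sufficiency requires constructing the remaining dice of a game and is the real work.

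\emph{Necessity.} Suppose $P(x)=\sum x^{a_i}$ is one die of a sum-standard $n$-dice game with dice-size $m$, for some $n$. Then the coefficients of $P$ are nonnegative integers and $P(1)=m$ just by counting labels. Since all labels are positive integers, $x \mid P(x)$.

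Writing the game equation as $P(x)\,Q(x) = x^{n}\bigl(\frac{x^m-1}{x-1}\bigr)^{n}$, where $Q\in\mathbb{Z}[x]$ is the product of the other dice, the right-hand side is monic. Comparing leading coefficients, the leading coefficient of $P$ is a positive integer dividing $1$, so $P$ is monic. Moreover $P$ divides $x^{n}\prod_{1<d\mid m}\Phi_d(x)^{n}$. Since $\mathbb{Z}[x]$ is a UFD and the $\Phi_d$ are irreducible, $P(x)=x^{a}\prod_{1<d\mid m}\Phi_d(x)^{e_d}$ with $a\ge 1$. Hence every nonzero root of $P(x)/x$ is an $m$th root of unity. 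I read the fourth condition in this sense, namely that $P(x)/x$ is a polynomial and its nonzero roots are $m$th roots of unity. The case $a>1$ is exactly a shifted die with smallest label $a$.

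\emph{Sufficiency.} Assume the four conditions. Factoring exactly as above gives $P(x)=x^{a}R(x)$ with $R=\prod_{1<d\mid m}\Phi_d^{e_d}$ and $R(1)=m$. Put $S(x)=\frac{x^m-1}{x-1}=\prod_{1<d\mid m}\Phi_d$. I would choose $N\ge\max_d e_d$ and try to exhibit $N-1$ further dice $Q_2,\dots,Q_N$ with nonnegative integer coefficients and $Q_j(1)=m$ such that $R\cdot Q_2\cdots Q_N=S^{N}$. The shift $x^{a}$ is then compensated: I would absorb $x^{a-1}$ by lowering all labels of one of the $Q_j$ by $a-1$, or else allow $n=N$ with a shifted total. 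If that is not possible I would restrict to $a=1$, which is clearly the intended normalisation.

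The evaluation constraint is automatic at the level of cyclotomic factors. Since $\Phi_d(1)=p$ for $d=p^j$ and $\Phi_d(1)=1$ otherwise, $R(1)=m$ forces $\sum_{j} e_{p^j}=\alpha_p$ for each prime $p^{\alpha_p}\parallel m$. So the complementary multiset of factors $\prod\Phi_d^{N-e_d}$ has the right total value $m^{N-1}$ and can be partitioned into $N-1$ groups each with value $m$.

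The main obstacle is arranging this partition so that every group has nonnegative coefficients, since products of cyclotomic polynomials generally do not. My first attempt would be to build the complementary dice from substituted standard dice. The basic identities are $\Phi_{d}(x^{p}) = \Phi_{dp}(x)$ when $p\mid d$ and $\Phi_d(x^p)=\Phi_d(x)\Phi_{dp}(x)$ when $p\nmid d$. From these, products of the form $\frac{x^{uv}-1}{x^{u}-1}\cdot\frac{x^{u}-1}{x-1}$ give nonnegative polynomials whose cyclotomic content is a prescribed "chain" of divisors. I would then argue by induction on the number of factors $\Phi_d$ with $e_d\ne 1$: pair each deficiency of $R$ with a surplus via such a chain dice, and use the freedom of taking $N$ large to put leftover copies of $S$ into standard dice. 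I expect the bookkeeping for non-prime-power $d$, where $\Phi_d(1)=1$ carries no value constraint, to be the delicate part. That is where I would check small cases such as $m=6$ and $m=12$ first.
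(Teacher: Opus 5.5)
The paper gives no proof of this statement; it quotes it from Gallian--Rusin. Your proposal therefore has to stand on its own, and it has two problems. The first is in the necessity part: you should not weaken the fourth condition to ``nonzero roots''. In a game, $P\,Q_2\cdots Q_n=x^nS(x)^n$ with $S(0)=1$, and each $Q_j$ is divisible by $x$ because its labels are positive. So $x^{a+n-1}\mid x^nS(x)^n$, which forces $a=1$. Hence $P(x)/x$ has nonzero constant term and \emph{all} its roots are $m$th roots of unity, exactly as stated. The case $a>1$ cannot be compensated: lowering the labels of some $Q_j$ by $a-1$ makes its smallest label $\le 0$. Under your reading the converse is in fact false. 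The die $P(x)=x^2S(x)$, labelled $2,\dots,m+1$, satisfies your version but lies in no game, since every sum in such a game would be at least $n+1$.

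The second and more serious problem is that the converse is not proved. Counting prime-power factors only secures $Q_j(1)=m$. Nonnegativity of the complementary dice is the whole content of the theorem, and you leave it as a hope. Your building block does not help: $\frac{x^{uv}-1}{x^u-1}\cdot\frac{x^u-1}{x-1}=\frac{x^{uv}-1}{x-1}$ telescopes to a standard die containing each $\Phi_d$, $1<d\mid uv$, exactly once, so it corrects no deviation. Single-factor corrections of $S$ can also fail: for $105\mid m$ the coefficient of $x^7$ in $S(x)\Phi_{105}(x)$ is $1+1+1+0+0-1-1-2=-1$.

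What is needed is an explicit family of nonnegative dice whose deviations $e(D)-\mathbf 1$, with $e(D)=(e_d)_{1<d\mid m}$, generate the lattice $L=\{\delta:\sum_j\delta_{p^j}=0\ \forall p\}$ \emph{as a monoid}. Then $\mathbf 1-e(R)\in L$ equals $\sum_{k=1}^T(e(D_k)-\mathbf 1)$, and $xR,xD_1,\dots,xD_T$ is a game. One family that works is the non-telescoping products $D=\prod_s\frac{x^{q_su_s}-1}{x^{u_s}-1}$, one factor per prime factor $q_s$ of $m$ counted with multiplicity, with $q_su_s\mid m$. For example, for $m=105$ the die $(1+x^{35}+x^{70})(1+x^{21}+\dots+x^{84})(1+x^{15}+\dots+x^{90})$ has $e_{105}=3$.

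To see that this family works, write $\chi_{q,u}$ for the indicator of $\{d\mid qu: d\nmid u\}$. M\"obius inversion in $u$ shows that the differences $\chi_{q,u}-\chi_{q,u'}$ generate $L$ as a group. Now suppose a functional $\ell$ had $\ell(e(D))\le\ell(\mathbf 1)$ for all such $D$. Swapping one step of a maximal divisor chain through $u,qu$ would force $u\mapsto\ell(\chi_{q,u})$ to be constant, and then M\"obius inversion gives $\ell\perp L$. So the deviations positively span $L\otimes\mathbb R$. A finite set that generates $L$ as a group and positively spans it generates $L$ as a monoid, which closes the gap.
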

Now let $p$ be a prime. We will use the fact that for any positive integer $j$, $\Phi_{p^{j}}$ have nonnegative coefficients and $\Phi_{p^{j}}(1) = p$.
\begin{theorem}
Let $m = p^{k}$ be a prime power. Then $S(2, p^{k})$ depends only on $k$. If $k \ge 4$, then
\[\frac{3^{k} +(-1)^{k} -2}{2k} < S(2, p^{k}) < \frac{3^{k} +(-1)^{k} -2}{4}\]
\end{theorem}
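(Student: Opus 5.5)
Write $m=p^{k}$. Since $\frac{x^{m}-1}{x-1}=\prod_{j=1}^{k}\Phi_{p^{j}}(x)$, every solution $f$ has the form $f(x)=x\prod_{j=1}^{k}\Phi_{p^{j}}(x)^{e_j}$ with $e_j\in\{0,1,2\}$. The first step is to show that these vectors parametrize the solutions exactly. The factor $x$ is forced by the last condition of Theorem 1 of \cite{GR}, and the partner $g$ then takes the complementary exponents $2-e_j$. The condition $f(1)=p^{k}$ together with $\Phi_{p^j}(1)=p$ is equivalent to $\sum_j e_j=k$. Nonnegativity of the coefficients is automatic, because each $\Phi_{p^j}$ has nonnegative coefficients. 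Hence solutions correspond bijectively to vectors $e\in\{0,1,2\}^k$ with $\sum e_j=k$, and the standard die is $e=(1,\dots,1)$. Writing $N_s$ for the coefficient of $t^s$ in $(1+t+t^2)^k$, we get $S(2,p^k)=N_k-1$. This depends only on $k$, which proves the first claim. As a sanity check, for $k=4$ this gives $S=18$, and indeed $10<18<20$.

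Next I rewrite the bounds in terms of the $N_s$. Put $A=\frac{3^k+(-1)^k-2}{2}$. Evaluating $(1+t+t^2)^k$ at $t=\pm1$ shows that the number of vectors whose sum has the same parity as $k$ is $\frac{3^k+(-1)^k}{2}$. Therefore
\[A+1=\sum_{s\equiv k \,(2)} N_s = N_k+2\sum_{j\ge1}N_{k-2j},\]
where the second equality uses the symmetry $N_s=N_{2k-s}$, coming from $e\mapsto 2-e$. The two claimed inequalities then become $A<k(N_k-1)$ and $N_k-1<2\sum_{j\ge1}N_{k-2j}$.

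For the lower bound, I use that $(1+t+t^2)^k$ is a product of log-concave sequences with no internal zeros. Hence $(N_s)$ is log-concave and symmetric, so it is unimodal with maximum $N_k$.
\begin{itemize}
\item If $k$ is even, the indices $s\equiv k$ are the $k+1$ even numbers in $[0,2k]$, and the endpoints satisfy $N_0=N_{2k}=1$. This gives $A+1\le (k-1)N_k+2$, and then $A<k(N_k-1)$ reduces to $N_k>k+1$.
\item If $k$ is odd, the indices are the $k$ odd numbers in $[1,2k-1]$, with $N_1=N_{2k-1}=k$. This gives $A+1\le (k-2)N_k+2k$, and the requirement reduces to $2N_k>3k-1$.
\end{itemize}
Both reduced inequalities follow easily for $k\ge4$, for instance from $N_k\ge\binom{k}{2}+1$, counting the vectors with a single $2$ and a single $0$ plus the all-ones vector.

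The upper bound is the main obstacle. It requires the near-central coefficients $N_{k-2}$ and $N_{k-4}$ to be comparable to $N_k$, so it cannot be obtained from crude bounds. I would first try to prove $2(N_{k-2}+N_{k-4})\ge N_k$ for $k\ge4$, using the explicit formulas $N_{k-2i}=\sum_j \frac{k!}{j!\,(j+2i)!\,(k-2j-2i)!}$ and comparing terms, or alternatively by an injection. For the injection, from a vector with sum $k$ containing at least one $1$, lower one $1$ to $0$ and then one further $1$ to $0$ or one $2$ to $1$, landing in sum $k-2$; the counting must be done carefully to control multiplicities. If that fails, I would fall back on the three-term recurrence for central trinomial coefficients, $kN_k=(2k-1)N_{k-1}+3(k-1)N_{k-2}$, together with the analogous identity relating $N_{k-1}$ and $N_{k-2}$. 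From these I would derive $N_{k-2}/N_k\ge c$ with $c$ close to $1/2$ for all $k\ge4$, checking the smallest cases by hand ($k=4$: $N_2=10$, $N_4=19$).
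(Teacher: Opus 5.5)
Your parametrization by vectors $e\in\{0,1,2\}^k$ with $\sum_j e_j=k$ is correct, and so is the resulting formula $S(2,p^k)=N_k-1$. Your lower bound, via symmetry and log-concavity of the row $(N_s)$, is also correct and complete. It takes a genuinely different route from the paper, which compares termwise using $\binom{2l}{l}\ge 4^l/(2l)\ge 4^l/k$.

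The gap is the upper bound. You correctly reduce it to $N_k-1<2\sum_{j\ge1}N_{k-2j}$, but after that you only list strategies:
\begin{itemize}
\item The inequality $2(N_{k-2}+N_{k-4})\ge N_k$ is announced but not proved.
\item The injection is not constructed, and you note yourself that its multiplicities are uncontrolled.
\item The fallback does not work as written. The recurrence $nT_n=(2n-1)T_{n-1}+3(n-1)T_{n-2}$ holds for the central coefficients $T_n$ of \emph{different} powers $(1+t+t^2)^n$. Your $N_{k-1}$ and $N_{k-2}$ are off-centre coefficients of the \emph{fixed} power $(1+t+t^2)^k$. In your notation the recurrence already fails at $k=4$, since $4\cdot 19\ne 7\cdot 16+9\cdot 10$.
\end{itemize}
So nothing in the proposal yields a bound of the form $N_{k-2}/N_k\ge c$, and the upper bound remains unproved.

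The missing idea is to sort the vectors by their set of coordinates different from $1$, rather than by the value of their sum. A vector has sum $\equiv k\pmod 2$ exactly when it has an even number $2l$ of coordinates different from $1$, each equal to $0$ or $2$. This gives $\frac{3^k+(-1)^k}{2}=\sum_l\binom{k}{2l}4^l$, which also follows from $(1+2)^k+(1-2)^k$. A sum of exactly $k$ forces $l$ zeros and $l$ twos among those $2l$ coordinates, so $N_k=\sum_l\binom{k}{2l}\binom{2l}{l}$.

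Now compare term by term. We have $\binom{2l}{l}=2\binom{2l-1}{l-1}\le 2^{2l-1}=\frac12\,4^l$, with equality only for $l=1$. Since $k\ge4$, the sum contains an $l=2$ term, so the inequality is strict: $N_k-1<\frac12\bigl(\frac{3^k+(-1)^k}{2}-1\bigr)=\frac{3^k+(-1)^k-2}{4}$. This is exactly the paper's argument. By aggregating over the values $s$ of the sum through the $t=\pm1$ evaluation, you lost this support-by-support comparison. That comparison makes the upper bound immediate, with no need for information about $N_{k-2}/N_k$.
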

$S(2, p) = 0$ as a special case of Theorem 3 in \cite{GR}.

$S(2, p^{2}) = 2$ as a special case of Theorem 2 in \cite{GR}.

$S(2, p^{3}) = 6$. This occurs for $p = 2$ in \cite{GR} when the case $m = 8$ is used to illustrate the method.
\begin{proof}
\[f(x) g(x) = \prod_{j = 1}^{k} \Phi_{p^{j}}(x)^{2}\]
On the right hand side there are $2k$ polynomial factors. $f(1) = g(1) = p^{k}$ implies that $f$ and $g$ both have to have $k$ such factors. Given that this is satisfied, the distribution of the factors between $f$ and $g$ is arbitrary.

The factor $\Phi_{p^{j}}(x)$ corresponding to the exponent $j$ can appear in the solution in three ways: two times in $f$, two times in $g$, or one time in $f$ and one time in $g$. The number of exponents appearing two times in $f$ and of those appearing two times in $g$ have to be equal. Let $l$ be this number. Hence $2l \le k$. The number of such solutions is
\[\binom{k}{l} \binom{k-l}{l} = \frac{k!}{l!^{2} (k -2l)!} = \binom{k}{2l} \binom{2l}{l}\]
Particularly, $l = 0$ implies $f = g$ and gives the standard game. In any other case, the polynomials $f$ and $g$ are distinct and uniquely determine each other.

The number of all the nonstandard solutions is
\[S(2, p^{k}) = \sum_{l=1}^{\lfloor k/2 \rfloor} \frac{k!}{l!^{2} (k -2l)!} = \sum_{l=1}^{\lfloor k/2 \rfloor} \binom{2l}{l} \binom{k}{2l}\]
\begin{lemma}
\[\sum_{l=0}^{\lfloor k/2 \rfloor} \binom{k}{2l} 2^{2l} = \frac{3^{k} +(-1)^{k}}{2}\]
\end{lemma}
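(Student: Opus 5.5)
The identity is the even-index part of a binomial expansion, so I would prove it with the standard $\pm 1$ trick. Since $2^{2l} = 2^{2l}\cdot 1^{k-2l}$, the left-hand side collects exactly the even-indexed terms of $(1+2)^{k} = \sum_{i=0}^{k} \binom{k}{i} 2^{i}$. The odd-indexed terms are removed by adding the expansion of $(1-2)^{k}$.

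The key step is to write
\[(1+2)^{k} + (1-2)^{k} = \sum_{i=0}^{k} \binom{k}{i} \left(2^{i} + (-2)^{i}\right).\]
For odd $i$ the bracket vanishes, and for even $i = 2l$ it equals $2\cdot 2^{2l}$. The index $l$ then runs over $0 \le 2l \le k$, that is, $0 \le l \le \lfloor k/2 \rfloor$. Hence
\[3^{k} + (-1)^{k} = 2\sum_{l=0}^{\lfloor k/2 \rfloor} \binom{k}{2l} 2^{2l},\]
and dividing by $2$ gives the claim. Equivalently, one can view this as evaluating the even part $\frac{h(x)+h(-x)}{2}$ of the generating polynomial $h(x) = (1+x)^{k}$ at $x = 2$.

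There is no real obstacle here. The only point that needs care is the range of summation: the upper limit $\lfloor k/2 \rfloor$ must cover exactly the even indices $i \le k$, and this holds whether $k$ is even or odd.

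For context, the reason for this lemma in the theorem's proof is presumably a comparison $\binom{2l}{l} \le 2^{2l}$ in one direction, and $\binom{2l}{l} \ge 2^{2l}/(2l+1)$ or similar in the other. These comparisons would bound $S(2,p^{k}) = \sum_{l\ge 1}\binom{2l}{l}\binom{k}{2l}$ in terms of $\frac{3^{k}+(-1)^{k}}{2} - 1$. That bookkeeping belongs to the theorem, however, not to the lemma.
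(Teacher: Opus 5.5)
Your proof is correct and matches the paper's own argument: the paper also expands $(1\pm 2)^{k}$ into even- and odd-indexed parts and adds the two expansions so that the odd terms cancel, which gives $2\sum_{l}\binom{k}{2l}2^{2l} = 3^{k}+(-1)^{k}$. Your guess about how the theorem uses the lemma is close but not exact, since the paper actually uses $\binom{2l}{l}\le \frac{1}{2}\cdot 2^{2l}$ and $\binom{2l}{l}\ge 2^{2l}/(2l)$; this does not affect the lemma itself.
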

\[3^{k} +(-1)^{k} \equiv 2 \cdot (-1)^{k} \equiv 2 \mod 4 \Rightarrow \frac{3^{k} +(-1)^{k}}{2} \equiv 1 \mod 2\]
\begin{proof}
\[(1 \pm 2)^{k} = \sum_{l=0}^{k} \binom{k}{l} (\pm 2)^{l} = \sum_{l=0}^{\lfloor k/2 \rfloor} \binom{k}{2l} 2^{2l} \pm \sum_{l=0}^{\lceil k/2 \rceil -1} \binom{k}{2l+1} 2^{2l+1}\]
\[2 \sum_{l=0}^{\lfloor k/2 \rfloor} \binom{k}{2l} 2^{2l} = (1 +2)^{k} +(1 -2)^{k} = 3^{k} +(-1)^{k} \Rightarrow \sum_{l=0}^{\lfloor k/2 \rfloor} \binom{k}{2l} 2^{2l} = \frac{3^{k} +(-1)^{k}}{2}\]
\end{proof}
For both the upper and the lower bound, we use the Lemma in the form
\[\sum_{l=1}^{\lfloor k/2 \rfloor} \binom{k}{2l} 2^{2l} = \frac{3^{k} +(-1)^{k}}{2} -1\]

The upper bound:
\[\binom{2l}{l} \le \sum_{i=0}^{l-1} \binom{2l}{2i+1} = \frac{1}{2} \cdot 2^{2l}\text{ }(2 \nmid l \ge 1)\]
\[\binom{2l}{l} < \sum_{i=0}^{l-1} \binom{2l}{2i+1} = \frac{1}{2} \cdot 2^{2l}\text{ }(2 \nmid l \ge 3)\]
\[\binom{2l}{l} < \sum_{i=0}^{l} \binom{2l}{2i} = \frac{1}{2} \cdot 2^{2l}\text{ }(2 \mid l \ge 2)\]
\[S(2, p^{k}) = \sum_{l=1}^{\lfloor k/2 \rfloor} \binom{k}{2l} \binom{2l}{l} \le \frac{1}{2} \sum_{l=1}^{\lfloor k/2 \rfloor} \binom{k}{2l} 2^{2l} =\]
\[= \frac{1}{2} \left( \frac{3^{k} +(-1)^{k}}{2} -1 \right) = \frac{3^{k} +(-1)^{k} -2}{4} \text{ }(k \ge 2)\]
The inequality
\[\binom{2l}{l} \le \frac{1}{2} \cdot 2^{2l}\]
is strict except if $l = 1$. For $k \ge 4$, there is a term of the sum such that $l > 1$.

The lower bound:
\[\binom{2l}{l} \ge \frac{2^{2l}}{2l}\text{ }(l \ge 1)\]
\[\frac{2^{2l}}{2l} > \frac{2^{2l}}{k}\text{ }(2l < k)\]
\[S(2, p^{k}) = \sum_{l=1}^{\lfloor k/2 \rfloor} \binom{k}{2l} \binom{2l}{l} = 
\sum_{l=1}^{\lfloor k/2 \rfloor} \binom{k}{2l} \binom{2l}{l} \ge \sum_{l=1}^{\lfloor k/2 \rfloor} \binom{k}{2l} \frac{2^{2l}}{2l} >\]
\[> \frac{1}{k} \sum_{l=1}^{\lfloor k/2 \rfloor} \binom{k}{2l} 2^{2l} = \frac{1}{k} \left( \frac{3^{k} +(-1)^{k}}{2} -1 \right) \ge\]
\[= \frac{3^{k} +(-1)^{k} -2}{2k}\text{ }(k \ge 2)\]
The inequality
\[\binom{2l}{l} \ge \frac{2^{2l}}{k}\] is strict except $2l = k$. For $k \ge 4$, there is a term of the sum such that $2l < k$.
\end{proof}
\subsection*{Acknowledgement}
I would like to acknowledge to the Algebra and Number Theory Research Seminar at the Eötvös Loránd University where I heard about this problem from Csaba Szabó and the method used from Péter Frenkel.

\end{document}